\theoremstyle{plain}
\newtheorem{theorem}{Theorem}[section]
\newtheorem{proposition}[theorem]{Proposition}
\newtheorem{corollary}[theorem]{Corollary}
\theoremstyle{definition}
\numberwithin{equation}{section}
\DeclareMathOperator{\spec}{Spec}
\DeclareMathOperator{\cent}{Cent}
\newcommand{\bnum}{\begin{enumerate}}
\newcommand{\enum}{\end{enumerate}}
\begin{document}

\title{Finite groups whose commuting graphs  are integral }
\author{Jutirekha Dutta and Rajat Kanti Nath\footnote{Corresponding author}   }
\date{}
\maketitle
\begin{center}\small{ Department of Mathematical Sciences,\\ Tezpur
University,  Napaam-784028, Sonitpur, Assam, India.\\
Emails: jutirekhadutta@yahoo.com, rajatkantinath@yahoo.com }
\end{center}


\smallskip

\noindent {\small{\textbf{Abstract:}  A finite non-abelian group $G$ is called commuting integral  if the commuting graph of $G$ is integral. In this paper, we show that  a finite group  is commuting integral if its central factor is isomorphic to  ${\mathbb{Z}}_p \times {\mathbb{Z}}_p$ or $D_{2m}$, where $p$ is any prime integer and $D_{2m}$ is the dihedral group of order $2m$. }

\bigskip

\noindent \small{\textbf{Mathematics Subject Classification:} Primary: 05C25; Secondary: 05C50, 20D60.}

\noindent \small{\textbf{ Key words:} Integral graph, Commuting graph, Spectrum of graph.} 

\section{ Introduction}

Let $G$ be a non-abelian group with center $Z(G)$. The commuting graph of $G$, denoted by $\Gamma_G$, is a simple undirected graph whose vertex set is $G\setminus Z(G)$, and two vertices $x$ and $y$ are adjacent if and only if $xy = yx$. In recent years, many mathematicians have considered  commuting graph of different finite groups and studied  various graph theoretic aspects (see \cite{amr06,bbhR09,iJ07,iJ08,mP13,par13}). A finite non-abelian group $G$ is called \textit{commuting integral } if the commuting graph of $G$ is integral. It is natural to ask     which finite groups are commuting integral. In this paper, we compute the spectrum of the commuting graphs of   finite groups whose central factors are isomorphic to ${\mathbb{Z}}_p \times {\mathbb{Z}}_p$, for any prime integer $p$, or $D_{2m}$, the dihedral group of order $2m$. Our computation reveals that  those groups are commuting integral.   


Recall that the spectrum of a graph  ${\mathcal{G}}$ denoted by $\spec({\mathcal{G}})$ is the set $\{\lambda_1^{k_1}, \lambda_2^{k_2},\dots$, $\lambda_n^{k_n}\}$, where $\lambda_1,  \lambda_2, \dots, \lambda_n$ are the eigenvalues of the adjacency matrix of $\mathcal{G}$ with multiplicities $k_1, k_2, \dots, k_n$ respectively. A graph ${\mathcal{G}}$ is called integral if $\spec({\mathcal{G}})$ contains only integers. It is well known that the complete graph $K_n$ on $n$ vertices is integral and $\spec(K_n) = \{(-1)^{n - 1}, (n - 1)^1\}$. Further, if $\mathcal{G} = K_{m_1}\sqcup K_{m_2}\sqcup\cdots  \sqcup K_{m_l}$, where $K_{m_i}$ are complete graphs on $m_i$ vertices for $1 \leq i \leq l$, then 
\[
\spec(\mathcal{G}) = \{(-1)^{\underset{i = 1}{\overset{l}{\sum}}m_i - l},\, (m_1 - 1)^1,\, (m_2 - 1)^1,\, \dots,\, (m_l - 1)^1\}.
\] 
  The notion of integral graph was introduced by 
Harary and  Schwenk \cite{hS74} in the year 1974. Since then many mathematicians have considered integral graphs, see for example \cite{aV09,iV07,wLH05}.
 A very impressive survey on integral graphs can be found in \cite{bCrS03}. Ahmadi et. al noted  that   integral graphs have some interest for designing the network topology of perfect state transfer networks, see  \cite{anb09} and the references there in.

 For any element $x$ of a group $G$, the set $C_G(x) = \{y \in G : xy = yx\}$ is called the centralizer of  $x $ in $G$. Let $|\cent(G)| = |\{C_G(x) : x \in G\}|$, that is the number of distinct centralizers in $G$. A group $G$ is called an $n$-centralizer group if $|\cent(G)| = n$. In \cite{bG94}, Belcastro and  Sherman  characterized finite $n$-centralizer groups for $n = 4, 5$. As a   consequence of our results,  we  show  that   $4, 5$-centralizer finite groups are commuting integral. Further, we show that   a finite $(p + 2)$-centralizer $p$-group is commuting integral for any prime $p$.

\section{Main results and consequences}

 
 
We begin this section with the following theorem.  
 
\begin{theorem}\label{main2}
Let $G$ be a finite group such that $\frac{G}{Z(G)} \cong {\mathbb{Z}}_p \times {\mathbb{Z}}_p$, where $p$ is a prime integer. Then 
\[
\spec(\Gamma_G) = \{(-1)^{(p^2 - 1)|Z(G)| - p - 1}, ((p - 1)|Z(G)| - 1)^{p + 1}\}.
\]
\end{theorem}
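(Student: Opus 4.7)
The plan is to reduce the computation of $\spec(\Gamma_G)$ to an application of the complete–graph spectrum formula quoted just before the theorem. For that, I want to show that $\Gamma_G$ is a disjoint union of $p+1$ complete graphs, each on $(p-1)|Z(G)|$ vertices.

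First I would analyze the centralizers. Since $G/Z(G) \cong \mathbb{Z}_p\times\mathbb{Z}_p$, each non-central element $x\in G\setminus Z(G)$ has image $xZ(G)$ of order $p$ in $G/Z(G)$, so $C_G(x)/Z(G) = \langle xZ(G)\rangle$, giving $|C_G(x)| = p|Z(G)|$. The proper centralizers $C_G(x)$ are therefore in bijection with the $p+1$ subgroups of order $p$ of $\mathbb{Z}_p\times\mathbb{Z}_p$; call the corresponding subgroups $A_1,\dots,A_{p+1}$. Each $A_i$ is an abelian subgroup of $G$ of order $p|Z(G)|$, with $A_i\cap A_j=Z(G)$ for $i\ne j$, and every non-central element lies in exactly one $A_i$. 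Hence
\[
G\setminus Z(G) = \bigsqcup_{i=1}^{p+1} (A_i\setminus Z(G)),
\]
a disjoint union of $p+1$ sets of size $(p-1)|Z(G)|$.

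Next I would describe the edges of $\Gamma_G$. For $x,y\in G\setminus Z(G)$, the elements commute iff $y\in C_G(x)$. Since each $A_i$ is abelian, any two elements of $A_i\setminus Z(G)$ are adjacent; conversely, if $x\in A_i\setminus Z(G)$ and $y\in A_j\setminus Z(G)$ with $i\ne j$, then $y\notin C_G(x)=A_i$ because $A_i\cap A_j = Z(G)$ and $y\notin Z(G)$, so $x$ and $y$ are not adjacent. Therefore
\[
\Gamma_G \;\cong\; \bigsqcup_{i=1}^{p+1} K_{(p-1)|Z(G)|}.
\]

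Finally I would plug this decomposition into the formula for the spectrum of a disjoint union of complete graphs recalled in the introduction. With $l = p+1$ and $m_i = (p-1)|Z(G)|$ for each $i$, we have $\sum_{i=1}^{p+1} m_i - l = (p^2-1)|Z(G)| - (p+1)$, and each eigenvalue $m_i - 1 = (p-1)|Z(G)| - 1$ occurs once per clique, giving multiplicity $p+1$. This yields exactly the claimed spectrum. The main obstacle is really just the first step — verifying cleanly that the non-central part of $G$ partitions as $\bigsqcup_i (A_i\setminus Z(G))$ with pairwise intersections equal to $Z(G)$; once that structural fact is established, the rest is bookkeeping.
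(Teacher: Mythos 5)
Your proposal is correct and follows essentially the same route as the paper: both identify the centralizers of non-central elements as the $p+1$ abelian subgroups of order $p|Z(G)|$ lying over the order-$p$ subgroups of $G/Z(G)$, deduce that $\Gamma_G$ is a disjoint union of $p+1$ copies of $K_{(p-1)|Z(G)|}$, and apply the quoted spectrum formula. The only difference is cosmetic — you phrase the decomposition via the subgroup lattice of ${\mathbb{Z}}_p\times{\mathbb{Z}}_p$, while the paper writes out the centralizers explicitly as unions of cosets of $Z(G)$.
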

\begin{proof}
Let $|Z(G)| = n$ then since $\frac{G}{Z(G)}\cong {\mathbb{Z}}_p\times {\mathbb{Z}}_p$ we have  $\frac{G}{Z(G)} = \langle aZ(G), bZ(G) : a^p, b^p, aba^{-1}b^{-1} \in Z(G)\rangle$, where $a, b \in G$ with $ab \ne ba$. Then for any $z \in Z(G)$, we have
\begin{align*}
C_G(a) &= C_G(a^iz) \,\,\, = Z(G) \sqcup aZ(G) \sqcup \cdots \sqcup a^{p -1}Z(G) \text{ for } 1 \leq i \leq p - 1,\\
C_G(a^jb) &= C_G(a^jbz) = Z(G) \sqcup a^jbZ(G) \sqcup \cdots \sqcup a^{(p -1)j}b^{p - 1}Z(G) \text{ for } 1 \leq j \leq p.
\end{align*}
These are the only  centralizers of non-central elements of $G$. Also note that these centralizers are abelian subgroups of $G$. Therefore
\[
\Gamma_G = K_{|C_G(a)\setminus Z(G)|} \sqcup (\underset{j = 1}{\overset{p}{\sqcup}} K_{|C_G(a^jb)\setminus Z(G)|}).
\]
 Thus $\Gamma_G = K_{(p - 1)n} \sqcup (\underset{j = 1}{\overset{p}{\sqcup}} K_{(p - 1)n})$,  since  $|C_G(a)| = pn$ and $|C_G(a^jb)| = pn$ for $1 \leq j \leq p$ where as usual $K_m$ denotes the complete graph with $m$ vertices. That is, $\Gamma_G =  \underset{j = 1}{\overset{p + 1}{\sqcup}} K_{(p - 1)n}$.  Hence the result follows.
\end{proof}
Above theorem shows that $G$ is commuting integral if  the central factor of $G$ is isomorphic to ${\mathbb{Z}}_p \times {\mathbb{Z}}_p$ for any prime integer $p$. Some consequences of Theorem \ref{main2} are given below.
\begin{corollary}
Let $G$ be a non-abelian group of order $p^3$, for any prime $p$, then  
\[
\spec(\Gamma_G) = \{(-1)^{p^3 - 2p - 1}, (p^2 - p - 1)^{p + 1}\}.
\]
Hence, $G$ is commuting integral.
\end{corollary}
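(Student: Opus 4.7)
The plan is to invoke Theorem \ref{main2} directly, after identifying the structure of the central factor. For a non-abelian group $G$ of order $p^3$, the first step is to show $|Z(G)| = p$. Since $G$ is a $p$-group, $Z(G)$ is nontrivial, so $|Z(G)| \in \{p, p^2, p^3\}$. The order $p^3$ is ruled out because $G$ is non-abelian, and $p^2$ is ruled out because otherwise $G/Z(G)$ would have order $p$, hence be cyclic, forcing $G$ to be abelian. Therefore $|Z(G)| = p$ and $|G/Z(G)| = p^2$.

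Next I would argue that $G/Z(G) \cong \mathbb{Z}_p \times \mathbb{Z}_p$. A group of order $p^2$ is abelian, hence isomorphic to either $\mathbb{Z}_{p^2}$ or $\mathbb{Z}_p \times \mathbb{Z}_p$. The cyclic case is again excluded by the standard fact that $G/Z(G)$ cyclic implies $G$ abelian. Hence $G/Z(G) \cong \mathbb{Z}_p \times \mathbb{Z}_p$, and the hypothesis of Theorem \ref{main2} is satisfied.

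Finally, I would substitute $|Z(G)| = p$ into the spectrum formula of Theorem \ref{main2}. The multiplicity of $-1$ becomes $(p^2 - 1)p - p - 1 = p^3 - 2p - 1$, and the eigenvalue $(p - 1)|Z(G)| - 1$ becomes $p^2 - p - 1$, appearing with multiplicity $p + 1$. This yields the claimed spectrum
\[
\spec(\Gamma_G) = \{(-1)^{p^3 - 2p - 1},\, (p^2 - p - 1)^{p + 1}\},
\]
whose entries are all integers, so $G$ is commuting integral.

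There is no real obstacle here; the only non-purely-mechanical step is confirming that $G/Z(G)$ cannot be cyclic, which is a textbook observation. The corollary is essentially a specialization of Theorem \ref{main2} once the structure of the central factor of a non-abelian group of order $p^3$ is pinned down.
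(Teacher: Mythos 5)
Your argument is correct and follows exactly the paper's route: establish $|Z(G)| = p$ and $G/Z(G) \cong \mathbb{Z}_p \times \mathbb{Z}_p$, then substitute into Theorem \ref{main2}; the paper simply states these facts without the (standard) justification you supply. The arithmetic $(p^2-1)p - p - 1 = p^3 - 2p - 1$ and $(p-1)p - 1 = p^2 - p - 1$ checks out.
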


\begin{proof}
Note that $|Z(G)| = p$ and  $\frac{G}{Z(G)} \cong {\mathbb{Z}}_p \times {\mathbb{Z}}_p$. Hence the  result follows from Theorem \ref{main2}.
\end{proof}

\begin{corollary}\label{4-cent}
If $G$ is a finite $4$-centralizer group then $G$ is commuting integral.     
\end{corollary}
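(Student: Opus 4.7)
The plan is to combine the Belcastro--Sherman classification of $4$-centralizer groups (\cite{bG94}, cited in the introduction) with Theorem \ref{main2}. According to that classification, a finite group $G$ satisfies $|\cent(G)| = 4$ if and only if $G/Z(G) \cong \mathbb{Z}_2 \times \mathbb{Z}_2$. Once this is invoked, the central factor of $G$ falls squarely into the hypothesis of Theorem \ref{main2} with the prime $p = 2$, so no further group-theoretic work is needed.

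First I would cite the Belcastro--Sherman characterization to rewrite the hypothesis as $G/Z(G) \cong \mathbb{Z}_2 \times \mathbb{Z}_2$. Then I would apply Theorem \ref{main2} with $p=2$, which yields
\[
\spec(\Gamma_G) = \{(-1)^{3|Z(G)| - 3},\, (|Z(G)| - 1)^{3}\}.
\]
Since $|Z(G)|$ is a positive integer, every exponent and every eigenvalue appearing here is an integer, so $\Gamma_G$ is an integral graph by definition. Hence $G$ is commuting integral.

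There is essentially no obstacle in the argument: the only non-trivial input is the structural fact $G/Z(G) \cong \mathbb{Z}_2 \times \mathbb{Z}_2$, and this is a known theorem rather than something to be reproved here. If one preferred a self-contained treatment, the main step would be to verify that $|\cent(G)| = 4$ forces $G/Z(G)$ to be a Klein four-group, which follows from the standard observation that $C_G(x) = C_G(y)$ whenever $xZ(G) = yZ(G)$, together with the fact that a group cannot be the union of three proper subgroups unless the quotient by the intersection of any two such centralizers is $\mathbb{Z}_2 \times \mathbb{Z}_2$. However, given that the Belcastro--Sherman result has already been cited, a one-line appeal suffices and Theorem \ref{main2} does the rest.
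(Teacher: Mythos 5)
Your proposal is correct and follows exactly the paper's own argument: invoke Theorem 2 of \cite{bG94} to get $G/Z(G) \cong \mathbb{Z}_2 \times \mathbb{Z}_2$, then apply Theorem \ref{main2} with $p = 2$ to obtain $\spec(\Gamma_G) = \{(-1)^{3(|Z(G)|-1)}, (|Z(G)|-1)^3\}$, which is integral. No difference in approach worth noting.
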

\begin{proof}
If $G$ is a finite $4$-centralizer group then by Theorem 2 of \cite{bG94} we have  $\frac{G}{Z(G)} \cong {\mathbb{Z}}_2 \times {\mathbb{Z}}_2$. Therefore, by Theorem \ref{main2},
\[
\spec(\Gamma_G) = \{(-1)^{3(|Z(G)| - 1)}, (|Z(G)| - 1)^3\}.
\]
This shows that $G$ is commuting integral.
\end{proof}

\noindent Further, we have the following result.

\begin{corollary}
If $G$ is a finite $(p+2)$-centralizer $p$-group, for any prime $p$, then  \[
\spec(\Gamma_G) = \{(-1)^{(p^2 - 1)|Z(G)| - p - 1}, ((p - 1)|Z(G)| - 1)^{p + 1}\}.
\]
Hence, $G$ is commuting integral.
\end{corollary}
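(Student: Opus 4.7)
The plan is to reduce this corollary to Theorem \ref{main2} by establishing the structural fact that a finite $(p+2)$-centralizer $p$-group $G$ must satisfy $G/Z(G) \cong \mathbb{Z}_p \times \mathbb{Z}_p$. Once that is in hand, the stated spectrum follows by a direct substitution into Theorem \ref{main2}, mirroring the way Corollary \ref{4-cent} handled the case $p = 2$.

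To justify $G/Z(G) \cong \mathbb{Z}_p \times \mathbb{Z}_p$, I would argue as follows. Since $G$ is non-abelian (otherwise $|\cent(G)| = 1 \ne p+2$), $G/Z(G)$ is not cyclic, so it contains a subgroup isomorphic to $\mathbb{Z}_p \times \mathbb{Z}_p$. The centralizers of non-central elements are in bijection with the proper non-trivial subgroups of $G/Z(G)$ that arise as $C_G(x)/Z(G)$; moreover, for any non-central $x$, the subgroup $\langle x, Z(G)\rangle/Z(G)$ is contained in $C_G(x)/Z(G)$. A short counting argument, together with the fact that $G/Z(G)$ is a $p$-group whose non-trivial proper subgroups each contribute at least one distinct centralizer, shows that $|\cent(G)| \ge p + 2$ with equality precisely when $G/Z(G)$ has exactly $p+1$ subgroups of order $p$ and no other non-trivial proper subgroups, that is, when $G/Z(G) \cong \mathbb{Z}_p \times \mathbb{Z}_p$. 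This characterization of $(p+2)$-centralizer $p$-groups appears in the literature on $n$-centralizer groups (and generalizes Theorem 2 of \cite{bG94}), so I would simply cite it rather than redo the combinatorics.

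With $G/Z(G) \cong \mathbb{Z}_p \times \mathbb{Z}_p$ established, Theorem \ref{main2} applies verbatim and yields
\[
\spec(\Gamma_G) = \{(-1)^{(p^2 - 1)|Z(G)| - p - 1},\, ((p - 1)|Z(G)| - 1)^{p + 1}\},
\]
which is manifestly integral, so $G$ is commuting integral.

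The only real obstacle is pinning down the correct reference for the structural lemma on $(p+2)$-centralizer $p$-groups; the spectral half of the argument is a one-line appeal to Theorem \ref{main2}. If a convenient citation is unavailable, the fallback is to give the short self-contained proof sketched above, leveraging the standard fact that subgroups of $G/Z(G)$ correspond to centralizers containing $Z(G)$.
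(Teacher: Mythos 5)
Your proposal matches the paper's proof: the paper simply cites Lemma 2.7 of \cite{ali00} (Ashrafi) for the fact that a finite $(p+2)$-centralizer $p$-group satisfies $G/Z(G)\cong{\mathbb{Z}}_p\times{\mathbb{Z}}_p$, and then applies Theorem \ref{main2} exactly as you do. The reference you were looking for is that lemma of Ashrafi, so your citation-based route is precisely the paper's argument (your sketched counting fallback is not needed).
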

\begin{proof}
If $G$ is a finite $(p + 2)$-centralizer $p$-group then by Lemma 2.7 of \cite{ali00} we have  $\frac{G}{Z(G)} \cong {\mathbb{Z}}_p \times {\mathbb{Z}}_p$. Now the  result follows from Theorem \ref{main2}.
\end{proof}



 The following theorem shows that $G$ is commuting integral if the central factor of $G$ is isomorphic to the dihedral group $D_{2m} = \{a, b : a^{m} = b^2 = 1, bab^{-1} = a^{-1}\}$. 
\begin{theorem}\label{main4}
Let $G$ be a finite group such that $\frac{G}{Z(G)} \cong D_{2m}$, for $m \geq 2$. Then
\[
\spec(\Gamma_G) = \{(-1)^{(2m - 1)|Z(G)| - m - 1}, (|Z(G)| - 1)^m, ((m - 1)|Z(G)| - 1)^1\}.
\]
\end{theorem}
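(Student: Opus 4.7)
The plan is to mirror the proof of Theorem 2.1 by exhibiting the distinct centralizers of non-central elements of $G$, showing each is abelian, and concluding that $\Gamma_G$ decomposes as a disjoint union of complete graphs. Set $n = |Z(G)|$, and choose lifts $a, b \in G$ of the dihedral generators of $G/Z(G)$, so that $a^m, b^2 \in Z(G)$ and $bab^{-1} = a^{-1}z$ for some $z \in Z(G)$. I will define the ``rotation'' subgroup $H := Z(G) \sqcup aZ(G) \sqcup \cdots \sqcup a^{m-1}Z(G)$ of order $mn$, and for each $j = 0, 1, \ldots, m-1$ set $K_j := Z(G) \sqcup a^j b Z(G)$. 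A direct computation shows $H$ is abelian; $K_j$ is a subgroup because $(a^jb)^2 = a^j(ba^jb^{-1})b^2 = z^j b^2 \in Z(G)$, and is abelian by the same calculation, so $|K_j| = 2n$. Together the sets $H \setminus Z(G)$ and $K_j \setminus Z(G)$ (for $j = 0, \ldots, m-1$) partition $G \setminus Z(G)$ into $(m-1)n + mn = (2m-1)n$ vertices.

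To show $\Gamma_G$ is exactly the disjoint union of cliques on these pieces, I must verify $C_G(x) = H$ for $x \in H \setminus Z(G)$ and $C_G(y) = K_j$ for $y \in a^j b Z(G)$; equivalently, that no non-central element of $H$ commutes with an element of any $K_j \setminus Z(G)$, and that the $K_j$'s share no commuting pairs for $j \neq k$. The main obstacle appears only when $m$ is even: then $a^{m/2}Z(G)$ is the non-identity central element of $D_{2m}$, and likewise $a^{j+m/2}bZ(G)$ commutes with $a^jbZ(G)$ inside $D_{2m}$, so projecting to $G/Z(G)$ does not by itself rule out unwanted commutations in $G$. Both potential obstructions are resolved by the single identity $a^m \neq z^{m/2}$ in $G$. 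Indeed, from $bab^{-1} = a^{-1}z$ one computes $ba^{m/2}b^{-1} = a^{-m/2}z^{m/2}$, so $a^{m/2}$ commutes with $b$ iff $a^m = z^{m/2}$; if so, $a^{m/2}$ would commute with both $a$ and $b$, hence with all of $G$, forcing $a^{m/2} \in Z(G)$, contradicting $a^{m/2}Z(G) \neq Z(G)$ in $D_{2m}$. A parallel computation shows that $a^jb$ and $a^{j+m/2}b$ commute in $G$ iff the same failed identity $a^m = z^{m/2}$ holds, so the $K_j$'s too are mutually disjoint. For $m$ odd, both separations are immediate from non-commutativity already in $D_{2m}$, since the relevant centralizers in the quotient are already disjoint.

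Putting the pieces together, $\Gamma_G = K_{(m-1)n} \sqcup \bigsqcup_{j=0}^{m-1} K_n$, a disjoint union of $m+1$ complete graphs. The claimed spectrum then follows directly from the formula for the spectrum of a disjoint union of complete graphs recorded in Section 1, producing $m+1$ cliques contributing $((m-1)n - 1)^1$ and $(n-1)^m$ to the spectrum, together with $(-1)^{(2m-1)n - (m+1)}$ from the total vertex count minus the number of components.
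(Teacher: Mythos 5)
Your proof is correct and follows essentially the same route as the paper: identify the centralizers of non-central elements as the preimages of $\langle \bar{a}\rangle$ and of the $m$ reflection subgroups of $G/Z(G)$, observe they are abelian, and read off the spectrum from $\Gamma_G = K_{(m-1)|Z(G)|} \sqcup \bigl(\sqcup_{j} K_{|Z(G)|}\bigr)$. The only difference is that you carefully verify the step the paper dismisses as ``not difficult to see,'' in particular ruling out the extra commutations that could arise when $m$ is even (where $\bar{a}^{m/2}$ is central in $D_{2m}$) via the observation that $a^m = z^{m/2}$ would force $a^{m/2} \in Z(G)$, which is a genuine point worth making explicit.
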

\begin{proof}
Since $\frac{G}{Z(G)} \cong D_{2m}$ we have $\frac{G}{Z(G)} = \langle xZ(G), yZ(G) : x^2, y^m,  xyx^{-1}y\in Z(G)\rangle$, where $x, y \in G$ with $xy \ne yx$.
It is not difficult to see that  for any $z \in Z(G)$, 
\[
 C_G(y) = C_G(y^iz) = Z(G) \sqcup yZ(G) \sqcup\cdots \sqcup  y^{m - 1}Z(G), 1 \leq i \leq m - 1
\]
  and
\[
C_G(xy^j) = C_G(xy^jz) = Z(G)  \sqcup xy^jZ(G), 1 \leq j \leq m
\]
are the only  centralizers of non-central elements of $G$. Also note that these centralizers are abelian subgroups of $G$. Therefore 
\[
\Gamma_G = K_{|C_G(y)\setminus Z(G)|} \sqcup (\underset{j = 1}{\overset{m}{\sqcup}} K_{|C_G(xy^j)\setminus Z(G)|}).
\]
 Thus $\Gamma_G = K_{(m - 1)n} \sqcup (\underset{j = 1}{\overset{m}{\sqcup}} K_n)$,  since  $|C_G(y)| = mn$ and $|C_G(x^jy)| = 2n$ for $1 \leq j \leq m$, where $|Z(G)| = n$. Hence the result follows.
\end{proof}

\begin{corollary}\label{5-cent}
If $G$ is a finite $5$-centralizer  group then $G$ is commuting integral. 
\end{corollary}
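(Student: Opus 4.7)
The plan is to reduce this corollary to the two spectrum computations already proved in this section, via the Belcastro--Sherman classification of finite $5$-centralizer groups. By Theorem 4 of \cite{bG94}, a finite group $G$ with $|\cent(G)| = 5$ must have central factor $G/Z(G)$ isomorphic either to $\mathbb{Z}_3 \times \mathbb{Z}_3$ or to the dihedral group $D_6 \cong S_3$. I would therefore split the argument into precisely these two cases.

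In the first case, where $G/Z(G) \cong \mathbb{Z}_3 \times \mathbb{Z}_3$, Theorem \ref{main2} applies directly with $p = 3$. Substituting yields $\spec(\Gamma_G) = \{(-1)^{8|Z(G)| - 4},\, (2|Z(G)| - 1)^{4}\}$, which lies entirely in $\mathbb{Z}$. In the second case, where $G/Z(G) \cong D_6$, Theorem \ref{main4} applies with $m = 3$, giving $\spec(\Gamma_G) = \{(-1)^{5|Z(G)| - 4},\, (|Z(G)| - 1)^{3},\, (2|Z(G)| - 1)^{1}\}$, again a set of integers. In either case $\Gamma_G$ is integral, so $G$ is commuting integral.

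There is essentially no obstacle here beyond invoking the classification correctly: once the two structural possibilities for $G/Z(G)$ are in hand, the corollary follows by direct substitution into the spectrum formulas of Theorems \ref{main2} and \ref{main4}. For clarity to the reader it seems worthwhile to display the two resulting spectra explicitly (as above), so the integrality in each case is visible without the reader having to recompute anything.
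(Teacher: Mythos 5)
Your proposal is correct and follows exactly the paper's own argument: invoke Theorem 4 of \cite{bG94} to get $G/Z(G) \cong \mathbb{Z}_3 \times \mathbb{Z}_3$ or $D_6$, then substitute into Theorems \ref{main2} and \ref{main4}, obtaining the same two spectra the paper displays. Nothing to add.
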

\begin{proof}
If $G$ is a finite $5$-centralizer group then by Theorem 4 of \cite{bG94} we have  $\frac{G}{Z(G)} \cong {\mathbb{Z}}_3 \times {\mathbb{Z}}_3$ or $D_6$. Now, if $\frac{G}{Z(G)} \cong {\mathbb{Z}}_3 \times {\mathbb{Z}}_3$ then  by Theorem \ref{main2} we have
\[
\spec(\Gamma_G) = \{(-1)^{8|Z(G)| - 4}, (2|Z(G)| - 1)^4\}.
\]
Again, if $\frac{G}{Z(G)} \cong D_6$ then  by Theorem \ref{main4} we have
\[
\spec(\Gamma_G) = \{(-1)^{5|Z(G)| - 4}, (|Z(G)| - 1)^3, (2|Z(G)| - 1)^1\}.
\]
In both the cases $\Gamma_G$ is integral. Hence $G$ is commuting integral.
\end{proof}

\noindent We also have the following result.
\begin{corollary}
Let $G$ be a finite non-abelian group and $\{x_1, x_2, \dots, x_r\}$ be a set of pairwise non-commuting elements of $G$ having maximal size. Then $G$ is commuting integral if $r = 3, 4$. 

\end{corollary}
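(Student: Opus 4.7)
The plan is to reduce the statement to Corollaries \ref{4-cent} and \ref{5-cent} by showing that the hypothesis forces the number of centralizers to equal $r+1$, so that $r=3$ gives a $4$-centralizer group and $r=4$ gives a $5$-centralizer group.

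First I would observe that by the maximality of $\{x_1,\dots,x_r\}$, every $y \in G \setminus Z(G)$ must commute with at least one $x_i$; otherwise $\{y, x_1, \dots, x_r\}$ would be a larger pairwise non-commuting set. Combined with $Z(G) \subseteq C_G(x_i)$, this yields $G = \bigcup_{i=1}^{r} C_G(x_i)$. Since $x_j \in C_G(x_j) \setminus C_G(x_i)$ for $i \ne j$, the centralizers $C_G(x_1), \dots, C_G(x_r)$ are pairwise distinct proper subgroups of $G$, and none equals $C_G(z) = G$ for $z \in Z(G)$; hence $|\cent(G)| \ge r+1$.

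The crux is the reverse inequality $|\cent(G)| \le r+1$. For each non-central $y$ I would show that $C_G(y) = C_G(x_i)$ for some $i$ with $y \in C_G(x_i)$. The idea is: if instead $C_G(y)$ were incomparable with every $C_G(x_i)$ containing $y$, one could pick $g \in C_G(y) \setminus C_G(x_i)$ and, by swapping $x_i$ with either $y$ or $g$ inside the maximum set, construct a pairwise non-commuting set of size $r+1$, contradicting the maximality of $r$. The bound $r \le 4$ keeps the case analysis on $|\{i : y \in C_G(x_i)\}|$ small and tractable.

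With $|\cent(G)| = r+1$ established, the result follows at once: for $r = 3$, $G$ is a $4$-centralizer group and Corollary \ref{4-cent} applies; for $r = 4$, $G$ is a $5$-centralizer group and Corollary \ref{5-cent} applies. The main obstacle is the clean execution of the case analysis underlying the upper bound on $|\cent(G)|$; an alternative, possibly cleaner, route is to invoke the known classification of groups with small maximum pairwise non-commuting set, giving directly $G/Z(G) \cong \mathbb{Z}_2 \times \mathbb{Z}_2$ when $r=3$, and $G/Z(G) \cong \mathbb{Z}_3 \times \mathbb{Z}_3$ or $D_6$ when $r=4$, and then apply Theorems \ref{main2} and \ref{main4} directly.
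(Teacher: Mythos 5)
Your top-level reduction is exactly the paper's: show $G$ is a $4$- or $5$-centralizer group according as $r=3$ or $4$, then quote Corollaries \ref{4-cent} and \ref{5-cent}. The paper does this in one line by citing Lemma 2.4 of \cite{ajH07}; you instead try to prove the reduction yourself, and that is where the gap lies. The inequality $|\cent(G)|\geq r+1$ is fine, but the ``crux'' $|\cent(G)|\leq r+1$ is only gestured at. The swapping sketch does not produce a contradiction as described: suppose $y$ is non-central, lies only in $C_G(x_1)$ among the $C_G(x_i)$, and $C_G(y)\neq C_G(x_1)$, so you pick $g\in C_G(y)\setminus C_G(x_1)$. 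The candidate set $\{y,g,x_2,\dots,x_r\}$ fails because $y$ and $g$ commute; replacing $x_1$ by $g$ in $\{x_1,\dots,x_r\}$ fails because maximality only tells you $g$ commutes with \emph{some} $x_j$, and nothing prevents that $j\geq 2$; no set of size $r+1$ of pairwise non-commuting elements is exhibited. Note also that the equality $|\cent(G)|=r+1$ is false for general $r$ (there are groups with many more distinct centralizers than the clique number of the non-commuting graph), so any correct argument must use $r\leq 4$ in an essential, structural way --- effectively one must show $G$ is covered irredundantly by $3$ or $4$ proper (abelian) centralizers and deduce $G/Z(G)\cong \mathbb{Z}_2\times\mathbb{Z}_2$, respectively $\mathbb{Z}_3\times\mathbb{Z}_3$ or $D_6$. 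That is genuinely the content of the cited Lemma 2.4 of \cite{ajH07}, not a short case analysis, and your sketch does not supply it.

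Your closing alternative --- invoke the known classification of groups with maximal pairwise non-commuting set of size $3$ or $4$ and then apply Theorems \ref{main2} and \ref{main4} --- is sound and is essentially equivalent to what the paper does (the paper routes through the centralizer count and the Belcastro--Sherman classification via Corollaries \ref{4-cent} and \ref{5-cent}). So the proposal is salvageable by replacing the unexecuted swap argument with that citation, but as written the main line of your proof has a missing step rather than a complete argument.
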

\begin{proof}
By Lemma 2.4 of \cite{ajH07}, we have that $G$ is a $4$-centralizer or a $5$-centralizer group according as  $r = 3$ or $4$. Hence the result follows from Corollary \ref{4-cent} and Corollary \ref{5-cent}.  
\end{proof}

We now compute the spectrum of the commuting graphs of some well-known groups, using  Theorem \ref{main4}.   

\begin{proposition}\label{main05}
Let $M_{2mn} = \langle a, b : a^m = b^{2n} = 1, bab^{-1} = a^{-1} \rangle$ be a metacyclic group, where $m > 2$. Then 
\[
\spec(\Gamma_{M_{2mn}}) = \begin{cases}
&\!\!\!\!\!\!\!\!\{(-1)^{2mn - m - n - 1}, (n - 1)^m, (mn - n - 1)^1\} \text{ if $m$ is odd}\\
&\!\!\!\!\!\!\!\!\{(-1)^{2mn - 2n - \frac{m}{2} - 1}, (2n - 1)^{\frac{m}{2}}, (mn - 2n -1)^1\} \text{ if $m$ is even}. 
\end{cases} 
\]
\end{proposition}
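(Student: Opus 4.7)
The plan is to reduce Proposition~\ref{main05} to Theorem~\ref{main4} by identifying $Z(M_{2mn})$ and verifying that the central quotient is dihedral; the only subtlety is that the order of the dihedral quotient (and hence the ``$m$'' fed into Theorem~\ref{main4}) depends on the parity of $m$.

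First, I would determine the center of $G := M_{2mn}$ by the usual commutation calculation. A general element $a^i b^j$ commutes with $a$ iff $j$ is even (since $bab^{-1}=a^{-1}$), and commutes with $b$ iff $a^{2i}=1$, i.e.\ $m\mid 2i$. Combining these, when $m$ is odd the only solution is $i=0$, giving $Z(G)=\langle b^2\rangle$ of order $n$; when $m$ is even, $i\in\{0,m/2\}$ is allowed, giving $Z(G)=\langle a^{m/2},b^2\rangle$ of order $2n$.

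Next I would check that the central quotient is dihedral in both cases. Using the images $\bar a := aZ(G)$ and $\bar b := bZ(G)$, the relation $\bar b\bar a\bar b^{-1}=\bar a^{-1}$ holds in $G/Z(G)$, and $\bar b^2=1$ since $b^2\in Z(G)$. The order of $\bar a$ is the smallest $k\ge 1$ with $a^k\in Z(G)$, which is $k=m$ when $m$ is odd and $k=m/2$ when $m$ is even. Hence $G/Z(G)\cong D_{2m}$ in the odd case (with $|Z(G)|=n$) and $G/Z(G)\cong D_{2(m/2)}=D_m$ in the even case (with $|Z(G)|=2n$). The hypothesis $m>2$ ensures these quotients are non-abelian, so Theorem~\ref{main4} applies.

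Finally, I would substitute into the formula of Theorem~\ref{main4}. In the odd case, plugging ``$m\mapsto m$'' and ``$|Z(G)|\mapsto n$'' into $\{(-1)^{(2m-1)|Z(G)|-m-1},(|Z(G)|-1)^m,((m-1)|Z(G)|-1)^1\}$ gives exactly the first branch of the claimed spectrum after simplifying $(2m-1)n-m-1 = 2mn-m-n-1$. In the even case, plugging ``$m\mapsto m/2$'' and ``$|Z(G)|\mapsto 2n$'' gives $(-1)^{(m-1)\cdot 2n - m/2 - 1}=(-1)^{2mn-2n-m/2-1}$, $(2n-1)^{m/2}$, and $((m/2-1)\cdot 2n-1)^1=(mn-2n-1)^1$, matching the second branch. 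The main potential pitfall is purely bookkeeping: not confusing the group-theoretic parameter $m$ of $M_{2mn}$ with the formal parameter $m$ appearing in the statement of Theorem~\ref{main4}, especially when $m$ is even and the dihedral quotient has order $m$ rather than $2m$.
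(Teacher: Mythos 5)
Your proposal is correct and follows the same route as the paper: identify $Z(M_{2mn})$ (order $n$ for $m$ odd, $2n$ for $m$ even), observe that the central quotient is $D_{2m}$ or $D_m$ accordingly, and substitute into Theorem \ref{main4}; you merely supply the centralizer computation and parameter bookkeeping that the paper leaves as an easy observation. No gaps.
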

\begin{proof}
Observe that $Z(M_{2mn}) = \langle b^2 \rangle$ or $\langle b^2 \rangle \cup a^{\frac{m}{2}}\langle b^2 \rangle$ according as $m$ is odd or even.  Also, it is easy to see that $\frac{M_{2mn}}{Z(M_{2mn})} \cong D_{2m}$ or $D_m$ according as $m$ is odd or even. Hence, the result follows from Theorem \ref{main4}.
\end{proof}

The above Proposition \ref{main05} also gives the spectrum of  the commuting graph of the dihedral group  $D_{2m}$, where $m > 2$, as given below:
\[
\spec(\Gamma_{D_{2m}}) = \begin{cases}
&\{(-1)^{m - 2}, 0^m, (m - 2)^1\} \text{ if $m$ is odd}\\
&\{(-1)^{\frac{3m}{2} - 3}, 1^{\frac{m}{2}}, (m - 3)^1\} \text{ if $m$ is even}. 
\end{cases} 
\]


\begin{proposition}
The spectrum of the commuting graph of the dicyclic group or the generalized quaternion group   $Q_{4m} = \langle a, b : a^{2m} = 1, b^2 = a^m,bab^{-1} = a^{-1}\rangle$, where $m \geq 2$, is given by
\[
\spec(\Gamma_{Q_{4m}}) = \{(-1)^{3m - 3},  1^m, (2m - 3)^1\}.
\]
\end{proposition}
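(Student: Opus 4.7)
The plan is to mirror the approach of Proposition \ref{main05} and reduce directly to Theorem \ref{main4}. The two preliminary facts I need to establish about $Q_{4m}$ are the structure of its center and of its central factor; once these are in hand, the spectrum drops out by substitution.

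First I would verify that $Z(Q_{4m}) = \{1, a^m\}$. The element $a^m$ satisfies $b a^m b^{-1} = a^{-m} = a^m$ (using $a^{2m} = 1$) and obviously commutes with $a$, so $a^m$ is central. Conversely, a power $a^i$ commutes with $b$ only when $a^i = a^{-i}$, which forces $i \in \{0, m\}$ modulo $2m$. Using the rewriting rule $ba^k = a^{-k}b$ derived from $bab^{-1} = a^{-1}$, one computes $(a^i b)a = a^{i-1}b$ while $a(a^i b) = a^{i+1}b$, so no element of the form $a^i b$ is central when $m \geq 2$. This yields $|Z(Q_{4m})| = 2$.

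Next I would pass to the quotient. Writing $\bar{x} = xZ(Q_{4m})$, the images of the generators satisfy $\bar{a}^m = \overline{a^m} = 1$, $\bar{b}^2 = \overline{a^m} = 1$, and $\bar{b}\bar{a}\bar{b}^{-1} = \bar{a}^{-1}$, with $\bar{a}\bar{b} \neq \bar{b}\bar{a}$, so $Q_{4m}/Z(Q_{4m}) \cong D_{2m}$. Substituting $|Z(G)| = 2$ into the formula of Theorem \ref{main4} produces the exponent $(2m - 1)\cdot 2 - m - 1 = 3m - 3$ on $-1$, multiplicity $m$ of the eigenvalue $2 - 1 = 1$, and a single copy of $(m - 1)\cdot 2 - 1 = 2m - 3$, exactly the claimed spectrum.

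I do not anticipate any serious obstacle, since the argument reduces to two standard facts about $Q_{4m}$. The only point worth flagging is the degenerate case $m = 2$: there $2m - 3 = 1$ coincides with the other positive eigenvalue, so the multiset collapses to $\{(-1)^3, 1^3\}$, which is consistent with $\Gamma_{Q_8}$ being three disjoint copies of $K_2$.
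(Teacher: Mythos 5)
Your proposal is correct and follows essentially the same route as the paper: identify $Z(Q_{4m}) = \{1, a^m\}$, observe $Q_{4m}/Z(Q_{4m}) \cong D_{2m}$, and substitute $|Z(G)| = 2$ into Theorem \ref{main4}. The extra verifications of the center and quotient, and the remark about the collapsed spectrum for $m = 2$, are sound details the paper leaves implicit.
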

\begin{proof}
The result follows from Theorem \ref{main4} noting that  $Z(Q_{4m}) = \{1, a^m\}$ and  $\frac{Q_{4m}}{Z(Q_{4m})} \cong D_{2m}$. 
\end{proof}

\begin{proposition}
Consider the group $U_{6n} = \{a, b : a^{2n} = b^3 = 1, a^{-1}ba = b^{-1}\}$. Then $\spec(\Gamma_{U_{6n}}) = \{(-1)^{5n - 4}, (n - 1)^3, (2n - 1)^1\}$.
\end{proposition}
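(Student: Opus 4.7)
The plan is to reduce the computation to Theorem \ref{main4} by identifying $Z(U_{6n})$ and verifying that the central quotient is isomorphic to $D_6$.

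First I would determine the center. The defining relation $a^{-1}ba = b^{-1}$ gives $ba = ab^{-1}$, and a routine induction yields $ba^i = a^i b^{(-1)^i}$, so $a^i$ commutes with $b$ if and only if $i$ is even. Writing a general element of $U_{6n}$ as $a^i b^j$ with $0 \leq i < 2n$ and $0 \leq j < 3$, the computation $(a^i b^j)a = a^{i+1}b^{-j}$ shows that such an element commutes with $a$ iff $b^{2j} = 1$, i.e.\ iff $j = 0$. Combining these observations, $Z(U_{6n}) = \langle a^2\rangle$, which is cyclic of order $n$.

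Next I would analyse the quotient $U_{6n}/Z(U_{6n})$. It has order $6n/n = 6$ and is generated by the cosets $\bar a$ and $\bar b$, which satisfy $\bar a^2 = \bar 1$, $\bar b^3 = \bar 1$, and $\bar a^{-1}\bar b \bar a = \bar b^{-1}$. This is precisely the standard presentation of $D_6$, so $U_{6n}/Z(U_{6n}) \cong D_6$.

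Finally, applying Theorem \ref{main4} with $m = 3$ and $|Z(G)| = n$ gives the $(-1)$-multiplicity $(2m-1)|Z(G)| - m - 1 = 5n - 4$, the eigenvalue $|Z(G)| - 1 = n - 1$ with multiplicity $m = 3$, and the eigenvalue $(m-1)|Z(G)| - 1 = 2n - 1$ with multiplicity $1$, which is exactly the claimed spectrum. The only substantive step is the center calculation; the identification with $D_6$ and the substitution into Theorem \ref{main4} are then mechanical, so I do not expect any real obstacle.
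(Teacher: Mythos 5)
Your proposal is correct and follows exactly the paper's route: identify $Z(U_{6n}) = \langle a^2\rangle$ (order $n$), observe $U_{6n}/Z(U_{6n}) \cong D_6$, and apply Theorem \ref{main4} with $m = 3$. The paper simply states these two facts without the explicit centralizer computation, so your write-up is just a more detailed version of the same argument.
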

\begin{proof}
Note that $Z(U_{6n}) = \langle a^2 \rangle$ and $\frac{U_{6n}}{Z(U_{6n})} \cong D_6$. Hence the result follows from Theorem \ref{main4}.
\end{proof}

We conclude the paper by noting that the   groups $M_{2mn}, D_{2m}, Q_{4m}$ and $U_{6n}$ are commuting integral.





\begin{thebibliography}{20}


\bibitem{ajH07}
A. Abdollahi, S. M. Jafarain and A. M. Hassanabadi,
\newblock  Groups with specific number of centralizers,
\newblock  {\em Houston J. Math.} 
{\bf 33} (2007), no. 1, 43--57.

\bibitem{aV09}
A. Abdollahi and E. Vatandoost,  
\newblock Which Cayley graphs are integral,  
\newblock {\em Electron. J. Combin.} 
{\bf 16}(2009), no. 1, R122, 17 pp.

\bibitem{anb09}
Omran Ahmadi, Noga Alon, Ian F. Blake and Igor E. Shparlinski,   
\newblock Graphs with integral spectrum,  
\newblock {\em Linear Algebra Appl.} 
{\bf 430} (2009), no. 1, 547-552.

\bibitem{amr06}
S. Akbari, A. Mohammadian, H. Radjavi and P. Raja,  
\newblock On the diameters of commuting graphs,  
\newblock {\em Linear Algebra  Appl.}  
{\bf 418} (2006), 161--176. 

\bibitem{ali00}
A. R. Ashrafi,  
\newblock On finite groups with a given number of centralizers,  
\newblock {\em Algebra Colloq.} 
{\bf 7}(2000), no. 2, 139--146. 

\bibitem{bbhR09}
C. Bates, D. Bundy, S. Hart and P. Rowley,  
\newblock A Note on Commuting Graphs for Symmetric Groups,  
\newblock {\em Electronic J.  Combin.} 
{\bf 16} (2009), 1--13.

\bibitem{bG94}
S. M. Belcastro and G. J. Sherman,   
\newblock Counting centralizers in finite groups,  
\newblock {\em Math. Magazine}, 
{\bf 67} (1994), no. 5, 366--374.





\bibitem{bCrS03} 
K. Bali$\acute{\rm n}$ska, D. Cvetkovi$\acute{\rm c}$, Z. Radosavljevi$\acute{\rm c}$, S. Simi$\acute{\rm c}$ and D. Stevanovi$\acute{\rm c}$,  
\newblock A survey on integral graphs,  
\newblock {\em Univ. Beograd. Publ. Elektrotehn. Fak. Ser. Mat.} 
{\bf 13} (2003), 42--65.




\bibitem{hS74}
F. Harary and A. J. Schwenk, 
\newblock {\em Which graphs have integral spectra?},
\newblock Graphs and Combinatorics, Lect. Notes Math., Vol 406, Springer-Verlag, Berlin, 1974, 45--51.

\bibitem{iV07}  
G. Indulal and A. Vijayakumar,  
\newblock Some new integral graphs,  
\newblock {\em Appl. Anal. Discrete Math.} 
{\bf 1} (2007), 420--426.


\bibitem{iJ07}
A. Iranmanesh and A. Jafarzadeh,  
\newblock Characterization of finite groups by their commuting graph,  \newblock {\em Acta Mathematica Academiae Paedagogicae Nyiregyhaziensis} {\bf 23}(2007), no. 1, 7--13.

\bibitem{iJ08}  
A. Iranmanesh and A. Jafarzadeh,  
\newblock On the commuting graph associated with the symmetric and alternating groups,  
\newblock {\em J. Algebra Appl.} 
{\bf 7} (2008), no. 1, 129--146.

\bibitem{mP13} 
G. L. Morgan and C. W. Parker,  
\newblock The diameter of the commuting graph of a finite group with trivial center,  
\newblock {\em J. Algebra} 
{\bf 393}  (2013), no. 1, 41--59.

\bibitem{par13} 
C. Parker,  
\newblock The commuting graph of a soluble group,  
\newblock {\em Bull. London Math. Soc.}, 
{\bf 45}  (2013), no. 4, 839--848.





\bibitem{wLH05} 
L. Wang, X. Li and C. Hoede,  
\newblock Two classes of integral regular graphs,
\newblock{\em Ars Combin.} 
{\bf 76} (2005), 303--319.





\end{thebibliography}
\end{document}